\newtheorem{theorem}{Theorem}
\newtheorem{lemma}{Lemma}
\newcommand{\N}{\mathbb N}
\newcommand{\Z}{\mathbb Z}
\begin{document}
\begin{sloppypar}
\noindent
\centerline{{\large Small systems of Diophantine equations with a prescribed}}
\vskip 0.2truecm
\noindent
\centerline{{\large number of solutions in non-negative integers}}
\vskip 0.4truecm
\noindent
\centerline{{\large Apoloniusz Tyszka}}
\vskip 0.4truecm
\noindent
{\bf Abstract.} Let \mbox{$E_n=\{x_i=1,~x_i+x_j=x_k,~x_i \cdot x_j=x_k: i,j,k \in \{1,\ldots,n\}\}$}.
If Matiyasevich's conjecture on single-fold Diophantine representations is true, then for every
computable function \mbox{$f:\N \to \N$} there is a positive integer $m(f)$ such that for
each integer \mbox{$n \geq m(f)$} there exists a system \mbox{$U \subseteq E_n$} which has
exactly $f(n)$ solutions in non-negative integers \mbox{$x_1,\ldots,x_n$}.
\vskip 0.4truecm
\noindent
{\bf Key words and phrases:} computable function, Davis-Putnam-Robinson-Matiyasevich theorem,
Matiyasevich's conjecture, single-fold Diophantine representation, system of Diophantine equations.
\vskip 0.4truecm
\noindent
{\bf 2010 Mathematics Subject Classification:} 03D20, 11D45, 11D72.
\vskip 0.4truecm
\noindent
{\bf 1.~~Introduction}
\vskip 0.4truecm
\par
We state the following double problem:
\vskip 0.2truecm
\noindent
$Does$ $there$ $exist$ $an$ $algorithm$ $which$ $to$ $each$ $Diophantine$ $equation$
$assigns$ $an$ $integer$ $which$ $is$ $greater$ $than$ $the$ $\stackrel{\textstyle \nearrow}{\textstyle \searrow}$
$\stackrel{\textstyle number~of~solutions}{\textstyle solutions~~~~~~~~~~~~~~~~~~~}$ $in$ \mbox{$non$-$negative$}
$integers$, $if$ $these$ $solutions$ $form$ $a$ $finite$ $set?$ $\stackrel{\textstyle ~}{\textstyle ~}$
\vskip 0.2truecm
\par
If we assume Matiyasevich's conjecture on single-fold Diophantine representations, the answer to both questions
is negative, see Theorem~\ref{main1}. If we assume the author's conjecture on integer arithmetic (\cite{Tyszka}),
the answer to both questions is positive, see Theorem~\ref{main2}.
\vskip 0.4truecm
\noindent
{\bf 2.~~Matiyasevich's conjecture vs the author's conjecture}
\vskip 0.4truecm
The Davis-Putnam-Robinson-Matiyasevich theorem states that every recursively enumerable
set \mbox{${\cal M} \subseteq {\N}^n$} has a Diophantine representation, that is
\begin{equation}
\tag*{\tt (R)}
(a_1,\ldots,a_n) \in {\cal M} \Longleftrightarrow
\exists x_1, \ldots, x_m \in \N ~~W(a_1,\ldots,a_n,x_1,\ldots,x_m)=0
\end{equation}
for some polynomial $W$ with integer coefficients, see \cite{Matiyasevich1} and \cite{Kuijer}.
The polynomial~$W$ can be computed, if we know a Turing machine~$M$
such that, for all \mbox{$(a_1,\ldots,a_n) \in {\N}^n$}, $M$ halts on \mbox{$(a_1,\ldots,a_n)$} if and only if
\mbox{$(a_1,\ldots,a_n) \in {\cal M}$}, see \cite{Matiyasevich1} and \cite{Kuijer}.
\vskip 0.2truecm
The representation {\tt (R)} is said to be \mbox{single-fold} if for any \mbox{$a_1,\ldots,a_n \in \N$} the equation
\mbox{$W(a_1,\ldots,a_n,x_1,\ldots,x_m)=0$} has at most one solution \mbox{$(x_1,\ldots,x_m) \in {\N}^m$}.
Yu.~Matiyasevich conjectures that each recursively enumerable set \mbox{${\cal M} \subseteq {\N}^n$}
has a single-fold Diophantine representation, see \mbox{\cite[pp.~341--342]{Davis}},
\mbox{\cite[p.~42]{Matiyasevich2}}, and \mbox{\cite[p.~79]{Matiyasevich3}}.
\vskip 0.2truecm
\par
Let $[\cdot]$ denote the integer part function,
\[
E_n=\{x_i=1,~x_i+x_j=x_k,~x_i \cdot x_j=x_k: i,j,k \in \{1,\ldots,n\}\}
\]
Before the main Theorem~\ref{main1}, we need an algebraic lemma together with introductory matter.
Let \mbox{$D(x_1,\ldots,x_p) \in {\Z}[x_1,\ldots,x_p] \setminus \{0\}$}.
A simple algorithm transforms the equation \mbox{$D(x_1,\ldots,x_p)=0$}
into an equivalent equation \mbox{$A(x_1,\ldots,x_p)=B(x_1,\ldots,x_p)$},
where the polynomials \mbox{$A(x_1,\ldots,x_p)$} and \mbox{$B(x_1,\ldots,x_p)$}
have non-negative integer coefficients and
\[
A(x_1,\ldots,x_p) \not\in\ \{x_1,\ldots,x_p,0\} \wedge B(x_1,\ldots,x_p) \not\in \{x_1,\ldots,x_p,0, A(x_1,\ldots,x_p)\}
\]
Let $\delta$ denote the maximum of the coefficients
of \mbox{$A(x_1,\ldots,x_p)$} and \mbox{$B(x_1,\ldots,x_p)$}, and let ${\cal T}$ denote the family of all polynomials
$W(x_1,\ldots,x_p) \in {\Z}[x_1,\ldots,x_p]$ whose coefficients belong to the interval $[0,~\delta]$ and
\[
{\rm deg}(W,x_i) \leq {\rm max}\Bigl({\rm deg}(A,x_i),~{\rm deg}(B,x_i)\Bigr)
\]
for each \mbox{ $i \in \{1,\ldots,p\}$}. Here we consider the degrees with respect to the variable~$x_i$.
Let $n$ denote the cardinality of~${\cal T}$. We choose any bijection
\[
\tau: \{p+1,\ldots,n\} \longrightarrow {\cal T} \setminus \{x_1,\ldots,x_p\}
\]
such that $\tau(p+1)=0$, $\tau(p+2)=A(x_1,\ldots,x_p)$, and $\tau(p+3)=B(x_1,\ldots,x_p)$.
Let ${\cal H}$ denote the family of all equations of the form
\[
x_i=1,~~x_i+x_j=x_k,~~x_i \cdot x_j=x_k~~(i,j,k \in \{1,\ldots,n\})
\]
which are polynomial identities in \mbox{${\Z}[x_1,\ldots,x_p]$} if
\[
\forall s \in \{p+1,\ldots,n\} ~~x_s=\tau(s)
\]
Since $\tau(p+1)=0$, the equation $x_{p+1}+x_{p+1}=x_{p+1}$ belongs to ${\cal H}$.
Let
\[
S={\cal H} \cup \{x_{p+1}+x_{p+2}=x_{p+3}\}
\]
\begin{lemma}\label{lem1}
The system~$S$ can be computed, \mbox{$S \subseteq E_n$}, and
\[
\forall x_1,\ldots,x_p \in \N ~\Bigl(D(x_1,\ldots,x_p)=0 \Longleftrightarrow
\]
\[
\exists x_{p+1},\ldots,x_n \in \N ~(x_1,\ldots,x_p,x_{p+1},\ldots,x_n) {\rm ~solves~} S\Bigr)
\]
For each \mbox{$x_1,\ldots,x_p \in \N$} with
\mbox{$D(x_1,\ldots,x_p)=0$} there exists a unique tuple \mbox{$(x_{p+1},\ldots,x_n) \in {\N}^{n-p}$}
such that the tuple \mbox{$(x_1,\ldots,x_p,x_{p+1},\ldots,x_n)$} solves $S$.
Hence, the equation \mbox{$D(x_1,\ldots,x_p)=0$} has the same number of non-negative integer
solutions as $S$.
\end{lemma}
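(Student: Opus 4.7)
My plan is to dispatch the easy parts first and then reduce everything to one structural claim about $\N$-solutions of $\mathcal{H}$.

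Computability of $S$ and the inclusion $S \subseteq E_n$ are immediate from the setup. The family $\mathcal{T}$ is finite and effectively enumerable from $A$ and $B$, so $\tau$ is computable; then for each of the finitely many triples $(i,j,k) \in \{1, \ldots, n\}^3$ one only has to test a polynomial identity in $\Z[x_1, \ldots, x_p]$ in order to decide membership in $\mathcal{H}$. The equations of $\mathcal{H}$ have the form required by $E_n$ by definition, and the extra equation $x_{p+1} + x_{p+2} = x_{p+3}$ does as well, since $p+1, p+2, p+3$ are in $\{1, \ldots, n\}$ because $0, A, B$ are distinct elements of $\mathcal{T} \setminus \{x_1, \ldots, x_p\}$.

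The $(\Rightarrow)$ direction of the equivalence, together with the construction of an extending tuple, is straightforward: given $D(a_1, \ldots, a_p) = 0$, set $a_s := \tau(s)(a_1, \ldots, a_p) \in \N$ for $s > p$. Every equation of $\mathcal{H}$ is then satisfied because by definition it is a polynomial identity after the substitution $x_s \mapsto \tau(s)$; the extra equation becomes $0 + A(a_1, \ldots, a_p) = B(a_1, \ldots, a_p)$, which holds by hypothesis. The converse implication, and simultaneously uniqueness and the equal-solution-count conclusion, reduces to the key claim: \emph{if $(x_1, \ldots, x_n) \in \N^n$ solves $\mathcal{H}$, then $x_s = \tau(s)(x_1, \ldots, x_p)$ for every $s > p$.} Granting this, the extra equation becomes $A(x_1, \ldots, x_p) = B(x_1, \ldots, x_p)$, i.e.\ $D(x_1, \ldots, x_p) = 0$, and the extension is uniquely determined.

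The key claim is the main obstacle. I would prove it by induction along a derivation of each $t \in \mathcal{T}$ from the atoms $0, 1, x_1, \ldots, x_p$ using additions and multiplications whose intermediate results stay inside $\mathcal{T}$. The base cases are $t = 0$, forced by the equation $x_{\tau^{-1}(0)} + x_{\tau^{-1}(0)} = x_{\tau^{-1}(0)} \in \mathcal{H}$ (coming from the identity $0 + 0 = 0$ and noted explicitly in the excerpt), and $t = 1$, forced by $x_{\tau^{-1}(1)} = 1 \in \mathcal{H}$. The inductive step first builds every constant $c \in [2, \delta]$ by repeated increments $c-1 \mapsto c$, then each monomial $x_1^{e_1} \cdots x_p^{e_p}$ within the allowed degree box by multiplying in one variable at a time, then each scaled monomial $c \cdot x^e$, and finally each $t = \sum_e c_e x^e \in \mathcal{T}$ by summing partial sums; every elementary step of the derivation is a polynomial identity among three elements of $\mathcal{T}$, hence corresponds to an equation of $\mathcal{H}$, so forced values propagate. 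The whole point of the specific definition of $\mathcal{T}$ --- with $\delta$ the maximum coefficient occurring in $A$ or $B$, and degree bounds taken coordinate-wise --- is exactly to make this closure argument work: partial sums and submonomials along the construction never escape $\mathcal{T}$. Once this closure is verified, the lemma follows.
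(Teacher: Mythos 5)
Your proof is correct. The paper states Lemma~\ref{lem1} without any proof at all, and your argument supplies exactly the intended content: the only non-obvious point is the forcing claim that every $\N$-solution of ${\cal H}$ must satisfy $x_s=\tau(s)(x_1,\ldots,x_p)$ for $s>p$, and your derivation-by-derivation closure argument (constants up to $\delta$, then monomials within the degree box, then scaled monomials, then partial sums, all of which remain in ${\cal T}$ precisely because of how $\delta$ and the degree bounds were chosen) establishes it completely.
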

\begin{theorem}\label{main1}
If Matiyasevich's conjecture is true, then for every computable function
\mbox{$f:\N \to \N$} there is a positive integer $m(f)$ such that for each integer
\mbox{$n \geq m(f)$} there exists a system \mbox{$U \subseteq E_n$} which has exactly $f(n)$
solutions in non-negative integers \mbox{$x_1,\ldots,x_n$}.
\end{theorem}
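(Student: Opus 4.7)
The plan is to apply Matiyasevich's single-fold conjecture to a parametric r.e.\ set whose \(n\)-th slice has cardinality \(f(n)\), fold the resulting polynomial equation into an \(E_n\)-system via Lemma~\ref{lem1}, and then force the parameter to equal \(n\) and pad out the variable count.

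Since \(f\) is computable, the set \(\mathcal{M}_f := \{(y,z)\in\N^2 : z < f(y)\}\) is decidable and hence recursively enumerable. Under Matiyasevich's conjecture, \(\mathcal{M}_f\) admits a single-fold Diophantine representation by some polynomial \(W(y,z,x_1,\ldots,x_m) \in \Z[y,z,x_1,\ldots,x_m]\): for each \((y,z)\), there is at most one \((x_1,\ldots,x_m)\in\N^m\) with \(W(y,z,x_1,\ldots,x_m)=0\), and such an \(x\)-tuple exists iff \(z < f(y)\). Consequently, for every \(n\) the equation \(W(n,z,x_1,\ldots,x_m)=0\) has exactly \(f(n)\) solutions in \(\N^{m+1}\), one per admissible \(z\in\{0,\ldots,f(n)-1\}\).

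Next I would apply Lemma~\ref{lem1} to \(W\), viewed as a polynomial in \(p:=m+2\) variables \(y,z,x_1,\ldots,x_m\). This produces a fixed integer \(N_0\) (depending only on \(W\), hence only on \(f\)) and a system \(S_0 \subseteq E_{N_0}\) whose solutions in \(\N^{N_0}\) are in bijection with the solutions of \(W=0\) in \(\N^{m+2}\), with the first \(m+2\) coordinates preserved. I would then construct, for each \(n\), a sub-system \(T_n\) that uses \(\ell(n)=O(\log n)\) fresh variables \(x_{N_0+1},\ldots,x_{N_0+\ell(n)}\) and forces the coordinate of \(S_0\) corresponding to \(y\) to equal \(n\): introduce an auxiliary variable equal to \(1\) via an equation \(x_i=1\), iterate doublings \(x_i+x_i=x_j\) to obtain the powers of two up to the largest power not exceeding \(n\), sum the binary digits of \(n\) via further equations \(x_i+x_j=x_k\), and in the final such equation use the \(y\)-coordinate of \(S_0\) as the output variable. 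Each intermediate variable is uniquely determined.

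Finally, choose \(m(f)\) large enough that \(N_0+\ell(n)<n\) for every \(n\geq m(f)\); this is possible because \(\ell(n)=O(\log n)\) and \(N_0\) is fixed. For such \(n\) set
\[
U \;:=\; S_0 \,\cup\, T_n \,\cup\, \{x_i+x_i=x_i : N_0+\ell(n)<i\leq n\} \;\subseteq\; E_n.
\]
Each padding equation \(x_i+x_i=x_i\) uniquely forces \(x_i=0\), so solutions of \(U\) in \(\N^n\) correspond bijectively to solutions of \(S_0\) whose \(y\)-coordinate equals \(n\), and there are exactly \(f(n)\) of these. The principal conceptual obstacle is recognizing that single-foldness must be invoked \emph{once} on the parametric family \(\mathcal{M}_f\), rather than separately on the \(n\)-indexed family of finite sets \(\{k:k<f(n)\}\); in the latter approach the variable count \(N_0\) produced by Lemma~\ref{lem1} would grow polynomially in \(n\) and overshoot the budget of \(n\) variables. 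Once this move is made, the remaining construction is routine bookkeeping.
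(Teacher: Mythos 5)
Your proof is correct, and it shares the overall skeleton of the paper's argument (single-fold representation, conversion to an $E$-system via Lemma~\ref{lem1}, forcing the parameter to equal $n$, padding the remaining variables with equations that determine them uniquely), but it realizes the multiplicity $f(n)$ in a genuinely different way. The paper applies the single-fold conjecture to the \emph{graph} of $f$, obtaining a formula $\Psi$ with $x_1=f(x_2)$, forces $x_2=n$, and then manufactures the $f(n)$-fold multiplicity afterwards with the tail equations $t=1$, $z+t=x_1$, $u+v=z$: these force $z=f(n)-1$, and $u+v=z$ then contributes exactly $z+1=f(n)$ choices of $(u,v)$ while everything else is rigid. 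This requires $f(n)\geq 1$ and hence a separate case $f(n)=0$, handled by taking $U=E_n$ (which is inconsistent since it contains both $x_1=1$ and $x_1+x_1=x_1$). You instead apply the conjecture to the set $\{(y,z):z<f(y)\}$, so the slice at $y=n$ already carries exactly $f(n)$ witnesses and no case split is needed --- a slightly cleaner route that buys uniformity at the cost of representing a two-parameter set rather than the graph. Two cosmetic divergences are immaterial: you encode $n$ in binary with $O(\log n)$ auxiliary variables and pad with $x_i+x_i=x_i$ (forcing $0$), whereas the paper deliberately encodes $2\left[\frac{n}{2}\right]$ by a chain of about $n/2$ additions and pads with $z_i=1$, since padding absorbs the slack either way. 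The one point to state explicitly (which you implicitly rely on) is that every index in $\{1,\ldots,n\}$ must occur in some equation of $U$, else a free variable would make the solution set infinite; your padding set does cover exactly the leftover indices, so this is fine.
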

\begin{proof}
By Matiyasevich's conjecture, there is a non-zero polynomial \mbox{$W(x_1,x_2,x_3,\ldots,x_r)$}
with integer coefficients such that for each non-negative integers $x_1$, $x_2$,
\[
x_1=f(x_2) \Longleftrightarrow \exists x_3, \ldots, x_r \in \N ~~W(x_1,x_2,x_3,\ldots,x_r)=0
\]
and at most one tuple \mbox{$(x_3,\ldots,x_r) \in {\N}^{r-2}$} satisfies \mbox{$W(x_1,x_2,x_3,\ldots,x_r)=0$}.
By Lemma~\ref{lem1}, there is an integer \mbox{$s \geq 3$} such that for each non-negative integers $x_1$, $x_2$,
\begin{equation}
\tag*{${\tt (E)}$}
x_1=f(x_2) \Longleftrightarrow \exists x_3,\ldots,x_s \in \N ~~\Psi(x_1,x_2,x_3,\ldots,x_s)
\end{equation}
where the formula \mbox{$\Psi(x_1,x_2,x_3,\ldots,x_s)$} is algorithmically determined as a conjunction of formulae of
the form \mbox{$x_i=1$}, \mbox{$x_i+x_j=x_k$}, \mbox{$x_i \cdot x_j=x_k$} \mbox{($i,j,k \in \{1,\ldots,s\}$)} and
\par
\begin{description}
\item{${\tt (SF)}$}
for each non-negative integers $x_1$, $x_2$, at most one tuple
\mbox{$(x_3,\ldots,x_s) \in {\N}^{s-2}$} satisfies \mbox{$\Psi(x_1,x_2,x_3,\ldots,x_s)$}.
\end{description}
Let \mbox{$m(f)=12+2s$}. If \mbox{$n \geq m(f)$} and \mbox{$f(n)=0$}, then we put \mbox{$U=E_n$}.
Assume that \mbox{$n \geq m(f)$} and \mbox{$f(n) \geq 1$}. For each integer \mbox{$n \geq m(f)$},
\[
n-\left[\frac{n}{2}\right]-6-s \geq m(f)-\left[\frac{m(f)}{2}\right]-6-s \geq m(f)-\frac{m(f)}{2}-6-s=0
\]
Let $U$ denote the following system
\[\left\{
\begin{array}{rcl}
{\rm all~equations~occurring~in~}\Psi(x_1,x_2,x_3,\ldots,x_s) \\
n-\left[\frac{n}{2}\right]-6-s {\rm ~equations~of~the~form~} z_i=1 \\
t_1 &=& 1 \\
t_1+t_1 &=& t_2 \\
t_2+t_1 &=& t_3 \\
&\ldots& \\
t_{\left[\frac{n}{2}\right]-1}+t_1 &=& t_{\left[\frac{n}{2}\right]} \\
t_{\left[\frac{n}{2}\right]}+t_{\left[\frac{n}{2}\right]} &=& w \\
w+y &=& x_2 \\
y+y &=& y {\rm ~(if~}n{\rm ~is~even)} \\
y &=& 1 {\rm ~(if~}n{\rm ~is~odd)} \\
t &=& 1 \\
z+t &=& x_1 \\
u+v &=& z \\
\end{array}
\right.\]
with $n$ variables. By the equivalence ${\tt (E)}$, the system $U$ is consistent over $\N$.
If a $n$-tuple \mbox{$(x_1,x_2,x_3,\ldots,x_s,\ldots,w,y,t,z,u,v)$} consists of non-negative
integers and solves $U$, then by the equivalence ${\tt (E)}$,
\[
x_1=f(x_2)=f(w+y)=f\left(2 \cdot \left[\frac{n}{2}\right]+y\right)=f(n)
\]
Hence, the last three equations in $U$, together with statements \mbox{${\tt (E)}$} and \mbox{${\tt (SF)}$},
guarantee us that the system $U$ has exactly $f(n)$ solutions in non-negative integers.
\end{proof}
\vskip 0.2truecm
\par
The following Conjecture contradicts to Matiyasevich's conjecture, see \cite{Tyszka}.
\vskip 0.2truecm
\noindent
{\bf Conjecture}~(\cite{Tyszka},~\cite{Cipu}). {\em If a system \mbox{$S \subseteq E_n$} has only finitely
many solutions in integers \mbox{$x_1,\ldots,x_n$}, then each such solution \mbox{$(x_1,\ldots,x_n)$}
satisfies \mbox{$|x_1|,\ldots,|x_n| \leq 2^{\textstyle 2^{n-1}}$}.}
\newpage
\noindent
{\bf Observation.} {\em For $n \geq 2$, the bound $2^{\textstyle 2^{n-1}}$ cannot be decreased because the system
\begin{displaymath}
\left\{
\begin{array}{rcl}
x_1+x_1 &=& x_2 \\
x_1 \cdot x_1 &=& x_2 \\
x_2 \cdot x_2 &=& x_3 \\
x_3 \cdot x_3 &=& x_4 \\
&\ldots& \\
x_{n-1} \cdot x_{n-1} &=&x_n
\end{array}
\right.
\end{displaymath}
\noindent
has exactly two integer solutions, namely $\left(0,\ldots,0\right)$ and
$\left(2,4,16,256,\ldots,2^{\textstyle 2^{n-2}},2^{\textstyle 2^{n-1}}\right)$.}
\vskip 0.2truecm
\par
Every Diophantine equation of degree at most $n$ has the form
\begin{equation}\label{equ1}
\sum_{\stackrel{\textstyle i_1,\ldots,i_k \in \{0,\ldots,n\}}{\textstyle i_1+\ldots+i_k \leq n}}
a(i_1,\ldots,i_k) \cdot x_1^{\textstyle i_1} \cdot \ldots \cdot x_k^{\textstyle i_k}=0
\end{equation}
where $a(i_1,\ldots,i_k)$ denote integers.
\begin{theorem}\label{main2} (\cite{Tyszka})
The Conjecture implies that if a Diophantine equation~(\ref{equ1}) has only finitely many solutions
in integers (non-negative integers, rationals), then their heights are bounded from above by
a computable function of
\[
{\rm max}\Bigl(\{k,n\} \cup \Bigl\{|a(i_1,\ldots,i_k)|:~\bigl(i_1,\ldots,i_k \in \{0,\ldots,n\}\bigr) \wedge \bigl(i_1+\ldots+i_k \leq n\bigr)\Bigr\}\Bigl)
\]
\end{theorem}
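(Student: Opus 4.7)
The plan is to combine Lemma~\ref{lem1} with the Conjecture, so as to pass from the single Diophantine equation~(\ref{equ1}) to an $E_{N'}$-system whose integer solutions are both finite in number and bounded by the Conjecture, and then to pull back the bound to the original variables.

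Set $M = \max\Bigl(\{k,n\} \cup \{|a(i_1,\ldots,i_k)|\}\Bigr)$. I first treat non-negative integer solutions. Applying the algorithm of Lemma~\ref{lem1} to equation~(\ref{equ1}) yields a system $S \subseteq E_N$ whose non-negative integer solutions project bijectively onto the non-negative integer solutions of~(\ref{equ1}). Because ${\cal T}$ consists of polynomials in $k$ variables with coefficients in $[0,\delta]$ and per-variable degree at most $n$, and $\delta \leq M$, I obtain the explicit estimate $N \leq (M+1)^{(M+1)^M}$, a computable function of $M$. To invoke the Conjecture I augment $S$ by Lagrange sum-of-four-squares representations of each variable: for every $i \leq N$ I adjoin a bounded number of fresh variables together with equations of the three kinds in the definition of $E_{N'}$ that jointly express $x_i = a_i^2+b_i^2+c_i^2+d_i^2$, yielding an enlarged system $S' \subseteq E_{N'}$ with $N'$ bounded linearly in $N$. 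Any integer solution of $S'$ forces $x_i \geq 0$ for every original variable, so projects to a non-negative integer solution of $S$; conversely, Jacobi's theorem implies that each non-negative integer has only finitely many four-square representations, so finitely many non-negative integer solutions of $S$ lift to only finitely many integer solutions of $S'$. The Conjecture then bounds each $|x_i|$ by $2^{\textstyle 2^{N'-1}}$, a computable function of $M$.

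For integer solutions of~(\ref{equ1}) I substitute $x_i = u_i - v_i$ together with auxiliary constraints $u_i,v_i \in \N$ and $u_i \cdot v_i = 0$, producing a Diophantine problem in non-negative integers whose solution set bijects with the integer solution set of~(\ref{equ1}); the previous step then applies. For rational solutions I write $x_i = p_i/q_i$ in lowest terms with $q_i \geq 1$, multiply through by a suitable power of $\prod_j q_j$ to clear denominators, and adjoin Diophantine conditions expressing $\gcd(p_i,q_i)=1$; the resulting non-negative integer problem is again handled by the first case. In all three regimes the auxiliary data is computable from $M$ alone, so the bound on heights is likewise a computable function of $M$.

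The main obstacle is the non-negativity bridge: the Conjecture bounds integer solutions of $E_n$-systems, whereas Lemma~\ref{lem1} controls non-negative integer solutions, and the two regimes need not coincide. The Lagrange four-square encoding resolves this, but it must be executed so that $N'$ remains a computable function of $M$ and so that the correspondence between solution sets is genuinely finite-to-one. Once this bookkeeping is in place, the rational case reduces to the parallel but orthogonal task of clearing denominators and expressing coprimality Diophantinely while keeping the resulting degrees and coefficients within bounds computable from $M$.
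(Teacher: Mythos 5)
The paper itself gives no proof of Theorem~\ref{main2} (it is quoted from \cite{Tyszka}), so I can only judge your argument on its own terms. Your first two branches are essentially sound: encoding equation~(\ref{equ1}) as a system in $E_N$ with $N$ computable from $M$, forcing non-negativity of every variable by adjoining four-square representations (finitely many witnesses per value, so finiteness of the solution set is preserved), invoking the Conjecture on the enlarged system, and reducing the integer case to the non-negative case via $x_i=u_i-v_i$, $u_iv_i=0$. This is the natural route and matches the spirit of the source.

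The rational branch, however, has a genuine gap. Writing $x_i=p_i/q_i$ and encoding $\gcd(p_i,q_i)=1$ by the Bézout condition $\exists a_i,b_i\ (a_ip_i+b_iq_i=1)$ destroys exactly the hypothesis you need: for any fixed coprime pair $(p_i,q_i)$ there are infinitely many integer witnesses $(a_i,b_i)$, namely $(a_i+tq_i,\,b_i-tp_i)$ for all $t\in\Z$, so the resulting system has infinitely many integer solutions whenever it has one, and the Conjecture (whose conclusion is conditional on finiteness) says nothing about it. Dropping coprimality is no better, since $(p_i,q_i)\mapsto(tp_i,tq_i)$ again yields infinitely many integer representatives of a single rational solution. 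A repair is possible --- normalize the Bézout witness by demanding $0\le a_i<q_i$, itself encoded with further four-square witnesses so that each rational solution lifts to only finitely many integer solutions --- but your proposal does not do this, and as written the step fails. (In \cite{Tyszka} the Conjecture is in fact formulated for each ring separately, and the rational case of Theorem~\ref{main2} is obtained from the rational analogue rather than derived from the integer statement quoted in this paper; deriving it from the integer version alone is precisely the part your sketch leaves unestablished.)
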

\vskip 0.2truecm
\noindent
{\bf Corollary.} {\em The conclusion of Theorem~\ref{main1} and the Conjecture are jointly inconsistent.}
\vskip 0.4truecm
\noindent
{\bf 3.~~Without unproven assumptions}
\vskip 0.4truecm
\begin{theorem}\label{the2}
For each integer \mbox{$n \geq 2$} and each integer \mbox{$m \geq 3+2 \cdot \left[\log_2\left(n-1\right)\right]$}
there exists a system \mbox{$U \subseteq \{x_i=1,~x_i+x_j=x_k: i,j,k \in \{1,\ldots,m\}\}$}
which has exactly $n$ solutions in non-negative integers \mbox{$x_1,\ldots,x_m$}.
\end{theorem}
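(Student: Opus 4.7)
The plan is to force all but two variables to unique values and let the remaining two variables $a,b$ range over non-negative solutions of $a+b = n-1$, which produces exactly $n$ solutions; the constant $n-1$ is encoded as a variable built through doublings and partial sums according to its binary expansion, and any leftover variables are pinned down by equations of the form $p_i = 1$.

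Set $k = [\log_2(n-1)]$ and write $n-1 = 2^{e_1} + 2^{e_2} + \cdots + 2^{e_r}$ with $k = e_1 > e_2 > \cdots > e_r \geq 0$ and $1 \leq r \leq k+1$. First I would introduce variables $t_0, t_1, \ldots, t_k$ with the equations $t_0 = 1$ and $t_i + t_i = t_{i+1}$ for $i = 0, \ldots, k-1$, forcing $t_i = 2^i$. If $r \geq 2$, next introduce variables $s_2, \ldots, s_r$ with $s_2 = t_{e_1} + t_{e_2}$ and $s_j = s_{j-1} + t_{e_j}$ for $j = 3, \ldots, r$, so that $s_r = n-1$; write $N$ for $s_r$, or for $t_k$ when $r = 1$. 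Then add two fresh variables $a, b$ and the equation $a + b = N$. So far this uses $(k+1) + (r-1) + 2 = k + r + 2$ variables, and since $r \leq k+1$ this quantity is at most $2k + 3 \leq m$. If $m > k + r + 2$, pad with $m - (k + r + 2)$ new variables, each constrained by an equation $p_i = 1$. After relabeling to $x_1, \ldots, x_m$, this yields the desired subsystem $U$ of $\{x_i = 1,~x_i + x_j = x_k : i, j, k \in \{1, \ldots, m\}\}$.

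Correctness is direct: the equations uniquely determine $t_i = 2^i$, the partial sums $s_j = 2^{e_1} + \cdots + 2^{e_j}$, and every $p_i = 1$, so the only remaining freedom lies in $(a, b) \in \N^2$ with $a + b = n - 1$, giving exactly $n$ solutions. The delicate point is the variable budget: the worst case is $n - 1 = 2^{k+1} - 1$, where $r = k + 1$ and the bare construction already uses $2k + 3$ variables, matching the lower bound $3 + 2[\log_2(n-1)]$ on $m$ exactly; for any other $n$ the construction uses strictly fewer variables and the slack is absorbed by the $p_i = 1$ padding. The edge case $n = 2$ gives $k = 0$ and $r = 1$, so the construction collapses to $t_0 = 1,~a + b = t_0$ together with padding, and this has the two expected solutions. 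I do not anticipate any genuine obstacle beyond this bookkeeping.
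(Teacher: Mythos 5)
Your proposal is correct and follows essentially the same route as the paper: both reduce to the equation $x+y=n-1$, which has exactly $n$ non-negative integer solutions, build the constant $n-1$ from $1$ by a double-and-add chain of at most $2\cdot\left[\log_2(n-1)\right]$ additions, and pin any surplus variables with equations of the form $x_i=1$. Your write-up merely makes the addition-chain bookkeeping (and the tight case $n-1=2^{k+1}-1$) explicit, which the paper leaves implicit.
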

\begin{proof}
The equation \mbox{$x+y=n-1$} has exactly $n$ solutions in non-negative integers \mbox{$x,y$}.
In order to write the equation \mbox{$x+y=n-1$} as an equivalent system of
equations of the form \mbox{$x_i=1$}, \mbox{$x_i+x_j=x_k$}, we assign new variables to
$1$, $x$, \mbox{and $y$}. Starting from $1$, we can compute $n-1$ by performing at most \mbox{$2 \cdot \left[\log_2\left(n-1\right)\right]$}
additions. Applying this observation, we find an equivalent system which contains at most
\mbox{$3+2\cdot\left[\log_2\left(n-1\right)\right]$} variables. If the found system contains
less than $m$ variables, then we declare that the missing variables are equal to $1$.
\end{proof}
\begin{theorem}\label{the3}
For each positive integer $n$ and each integer \mbox{$m \geq 11+2 \cdot \left[\log_2\left(2n-1\right)\right]$}
there exists a system \mbox{$U \subseteq E_m$} which has exactly $n$ solutions in non-negative
integers \mbox{$x_1,\ldots,x_m$} and at most finitely many solutions in integers \mbox{$x_1,\ldots,x_m$}.
\end{theorem}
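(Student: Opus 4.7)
The plan is to realise in $E_m$ the single equation $u\cdot v = 2^{n-1}$. Over $\N$ this equation has exactly $n$ solutions, namely the divisor pairs $(u,v)=(2^k,\,2^{n-1-k})$ for $k=0,1,\ldots,n-1$; over $\Z$ it has exactly $2n$ solutions, these pairs together with $(-2^k,\,-2^{n-1-k})$. Since $2^{n-1}>0$ forbids $u=0$ or $v=0$, both counts are correct, and in particular the integer total $2n$ is finite, as demanded. So the task reduces to building a variable whose forced value is $2^{n-1}$, appending the equation $u\cdot v = N$, and padding with trivial equations $z_i=1$ up to $m$ variables.

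Starting from $x_1=1$ and $x_1+x_1=x_2$ (so $x_2=2$), I would use the repeated-squaring chain $x_{j+3}=x_{j+2}\cdot x_{j+2}$ for $j=0,1,\ldots,K-1$, where $K=\lfloor\log_2(n-1)\rfloor$, thereby producing $x_{j+2}=2^{2^j}$ for $j=0,1,\ldots,K$. Writing $n-1=\sum_{j\in I}2^j$ in binary, I would introduce $|I|-1$ further multiplicative equations to multiply the corresponding $x_{j+2}$'s into a variable $N$ whose forced value is $2^{n-1}$, then add variables $u$ and $v$ with the equation $u\cdot v=N$. The equation $x_1=1$ pins down $x_1$ even over $\Z$, and each subsequent equation in the construction then determines the next variable uniquely; likewise every padding variable $z_i$ is forced to $1$. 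Therefore the solution count of the whole system in both $\N^m$ and $\Z^m$ coincides with that of $u\cdot v = 2^{n-1}$, giving exactly $n$ non-negative integer solutions and $2n$ integer solutions.

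It remains to check the variable budget $m\geq 11+2\lfloor\log_2(2n-1)\rfloor$. The essential part uses two variables for $1$ and $2$, at most $K$ variables for the squaring tower, at most $|I|-1\leq K$ for the binary combination phase, and two for $u$ and $v$, totalling at most $4+2K$ variables (and only $3$ when $n=1$). Since $\lfloor\log_2(2n-1)\rfloor\geq K$ for every positive integer $n$, the bound $11+2\lfloor\log_2(2n-1)\rfloor$ exceeds $4+2K$ comfortably, and the surplus is absorbed by padding equations $z_i=1$, which do not affect the solution counts. I expect no genuine obstacle beyond this bookkeeping; the generous constant $11$ in the stated bound is precisely there to absorb the slack in the squaring-plus-combination chain.
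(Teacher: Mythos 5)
Your proof is correct, and it reaches the theorem by a genuinely different key equation than the paper's. The paper realises $(2x+1)^2+(2y)^2=5^{2n-1}$, citing Schinzel's theorem for the count of exactly $n$ solutions in non-negative integers, and gets finiteness over $\Z$ because a sum of squares equal to a constant bounds every variable; the eleven auxiliary quantities $1,2,3,5,x,x+1,2x+1,(2x+1)^2,y,2y,(2y)^2$ are exactly where the constant $11$ in the hypothesis comes from. You instead realise $u\cdot v=2^{n-1}$, whose counts ($n$ over $\N$, $2n$ over $\Z$, both finite since $2^{n-1}\neq 0$ excludes $u=0$ and $v=0$) need nothing beyond the list of divisors of a power of $2$, and your square-and-multiply chain uses at most $4+2\left[\log_2(n-1)\right]$ essential variables, well under the threshold $11+2\left[\log_2(2n-1)\right]$, with the padding equations $z_i=1$ absorbing the surplus without changing either count. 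The shared skeleton is the same in both arguments --- force a chain of constants starting from $x_i=1$, leave a single equation free to carry the solution count, and verify that every auxiliary variable is uniquely determined so the count is not inflated --- but your choice of witness equation makes the proof self-contained (no citation to Schinzel) and in fact proves the statement with a smaller admissible lower bound on $m$; the only thing lost is the connection to the circle--lattice-point phenomenon that motivates the paper's version. Your handling of the edge case $n=1$ separately (where $\left[\log_2(n-1)\right]$ is undefined) is necessary and correctly done.
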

\begin{proof}
For each non-negative integer $n$, the equation \mbox{$(2x+1)^2+(2y)^2=5^{\textstyle 2n-1}$} has exactly $n$
solutions in non-negative integers, see \cite{Schinzel}. In order to write the
equation \mbox{$(2x+1)^2+(2y)^2=5^{\textstyle 2n-1}$} as an equivalent system of equations of the form
\mbox{$x_i=1$}, \mbox{$x_i+x_j=x_k$}, \mbox{$x_i \cdot x_j=x_k$}, we assign new variables to
\[
1,~~2,~~3,~~5,~~x,~~x+1,~~2x+1,~~(2x+1)^2,~~y,~~2y,~~(2y)^2
\]
Starting from $5$, we can compute \mbox{$5^{\textstyle 2n-1}$} by performing at most
\mbox{$2 \cdot \left[\log_2\left(2n-1\right)\right]$} multiplications.
Applying this observation, we find an equivalent system which contains at
most \mbox{$11+2 \cdot \left[\log_2\left(2n-1\right)\right]$} variables.
If the found system contains less than $m$ variables, then we declare that the missing
variables are equal to $1$.
\end{proof}
\begin{theorem}\label{the4}
For each integer \mbox{$n \geq 4$} and each integer \mbox{$m \geq 8+2\cdot\left[\log_2\left(n-3\right)\right]$}
there exists a system \mbox{$U \subseteq E_m$} which has exactly $n$ solutions in integers \mbox{$x_1,\ldots,x_m$}.
\end{theorem}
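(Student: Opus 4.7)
My plan is to mirror the strategy of Theorems~\ref{the2} and~\ref{the3}: first, exhibit a Diophantine equation whose set of integer solutions has cardinality exactly~$n$; second, translate this equation into a system $U\subseteq E_m$ by introducing one new variable per subexpression and by computing the parameter in a chain of doublings and multiplications; third, pad the resulting system with equations of the form $x_i=1$ until it has precisely $m$ variables. The shape of the bound $8+2[\log_2(n-3)]$ is the main hint: the constant $8$ should equal the number of intrinsic subexpressions of the source equation (compare with the $11$ subexpressions enumerated for $(2x+1)^2+(2y)^2=5^{2n-1}$ in Theorem~\ref{the3}), while the $2[\log_2(n-3)]$ term counts the atomic arithmetic steps needed to build $n-3$ from~$1$ (mirroring Theorem~\ref{the2}'s construction of $n-1$ by iterated additions).

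For the candidate source equation I would try a bilinear template such as $x\cdot y=\psi(n-3)$ or $(x+a)(y+b)=\psi(n-3)$, with $\psi$ engineered so that the integer solution set has cardinality exactly~$n$. Because the symmetry $x\mapsto -x$ over $\Z$ tends to force integer solution counts to be even, I would include at least one variable pinned to~$1$ through the atomic relation $x_i=1$ to break this symmetry asymmetrically (the equation $x_i=1$ forces $x_i=1$ rather than $\pm 1$, even over $\Z$). Once the equation is fixed, each compound subexpression receives a fresh variable and each arithmetic operation is realised by one of the allowed atomic equations $x_i=1$, $x_i+x_j=x_k$, $x_i\cdot x_j=x_k$. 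Surplus variables are then declared equal to~$1$; this is harmless because $x_i=1$ has the unique integer solution $x_i=1$, so the total number of integer solutions is preserved.

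The principal obstacle is locating the source equation. Over $\Z$ the integer solution counts of the natural candidates are controlled by arithmetic functions whose ranges are restricted: $xy=c$ has $2\tau(|c|)$ integer solutions, and $x^2+y^2=c$ has $r_2(c)$ solutions, which is a multiple of~$4$ for every $c\geq 1$. Realising \emph{every} value $n\geq 4$ as the integer solution count of one small equation is therefore nontrivial and may require a case split according to the residue of $n$ modulo a small modulus, supplying a different auxiliary equation in each class while staying within the variable budget $8+2[\log_2(n-3)]$. Once such an equation is found, the remaining bookkeeping---counting subexpressions, verifying the solution-preserving bijection in the style of Lemma~\ref{lem1}, and padding to $m$ variables---follows the familiar template of the preceding theorems.
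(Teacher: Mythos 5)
Your outline correctly identifies the paper's strategy (source equation, variable-per-subexpression translation, padding with $x_i=1$), and you even anticipate the case split; but it stops exactly at the step that constitutes the content of the theorem: you never exhibit the source equation. Worse, the one concrete remedy you offer for the parity obstruction --- pinning an auxiliary variable to $1$ via $x_i=1$ --- cannot work: adjoining $x_i=1$ to a system puts its solution set in bijection with that of the original system, so it cannot turn the even count $2\tau(|c|)$ of $x\cdot y=c$ into an odd one. The paper resolves this by splitting on the parity of $n$. For even $n$ it uses $x\cdot y=2^{(n-2)/2}$, which has exactly $2\left(\frac{n-2}{2}+1\right)=n$ integer solutions. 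For odd $n$ it uses $\left(x\cdot y-2^{(n-3)/2}\right)\cdot\left(x^2+y^2\right)=0$: the first factor vanishes on exactly $n-1$ integer pairs, the second factor vanishes over $\Z$ only at $(0,0)$, and $(0,0)$ is not a zero of the first factor, so the union has exactly $(n-1)+1=n$ elements. Multiplying by $x^2+y^2$ is precisely the device for adding the single extra solution that your parity analysis shows is needed; this is the missing idea.

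The bookkeeping then goes roughly as you describe, though your guess about the constant $8$ is off. In the even case the subexpressions $1,2,x,y$ plus at most $2\left[\log_2\left(\frac{n-2}{2}\right)\right]$ multiplications to reach $2^{(n-2)/2}$ give at most $2+2\left[\log_2(n-2)\right]<8+2\left[\log_2(n-3)\right]\leq m$ variables. In the odd case one assigns variables to the ten subexpressions $1$, $2$, $x$, $y$, $x\cdot y$, $x\cdot y-2^{(n-3)/2}$, $x^2$, $y^2$, $x^2+y^2$, and the full product, giving at most $10+2\left[\log_2\left(\frac{n-3}{2}\right)\right]=8+2\left[\log_2(n-3)\right]$ variables; the $8$ arises from $10-2$, not from a count of $8$ subexpressions. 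Padding with $x_i=1$ is harmless exactly as you say. Without the explicit pair of source equations, however, your argument is an (accurate) plan rather than a proof.
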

\begin{proof}
Let
\begin{displaymath}
D(t,x,y)=\left\{
\begin{array}{cl}
x \cdot y - 2^{\textstyle \frac{t-2}{2}} & {\rm ~~if~~}t{\rm ~~is~~even} \\
\left(x \cdot y - 2^{\textstyle \frac{t-3}{2}}\right) \cdot \left(x^2+y^2\right) & {\rm ~~if~~}t{\rm ~~is~~odd}
\end{array}
\right.
\end{displaymath}
For each non-negative integer $n$, the equation \mbox{$D(n,x,y)=0$} has exactly $n$ integer solutions.
\vskip 0.2truecm
\noindent
{\em Case~1:}~$n$~$\in$~$\{4,6,8,\ldots\}$. In order to write the equation \mbox{$x \cdot y=2^{\textstyle \frac{n-2}{2}}$}
as an equivalent system of equations of the form $x_i=1$, \mbox{$x_i+x_j=x_k$}, \mbox{$x_i \cdot x_j=x_k$},
we assign new variables to $1$, $2$, $x$, $y$. Starting from $2$, we can compute \mbox{$2^{\textstyle \frac{n-2}{2}}$}
by performing at most \mbox{$2 \cdot \left[\log_2\left(\frac{n-2}{2}\right)\right]$} multiplications.
Applying this observation, we find an equivalent system which contains at
most \mbox{$4+2 \cdot \left[\log_2\left(\frac{n-2}{2}\right)\right]$} variables. Since
\[
4+2 \cdot \left[\log_2\left(\frac{n-2}{2}\right)\right]=
2+2 \cdot \left[\log_2\left(n-2\right)\right]<8+2 \cdot \left[\log_2\left(n-3\right)\right] \leq m
\]
the found system contains less than $m$ variables. We declare that the missing variables are equal to $1$.
\vskip 0.2truecm
\noindent
{\em Case~2:}~$n$~$\in$~$\{5,7,9,\ldots\}$. In order to write the equation
\mbox{$\left(x \cdot y - 2^{\textstyle \frac{n-3}{2}}\right) \cdot \left(x^2+y^2\right)=0$}
as an equivalent system of equations of the form \mbox{$x_i=1$},
\mbox{$x_i+x_j=x_k$}, \mbox{$x_i \cdot x_j=x_k$}, we assign new variables to
\[
1,~~2,~~x,~~y,~~x \cdot y,~~x \cdot y-2^{\textstyle \frac{n-3}{2}},~~x^2,~~y^2,~~x^2+y^2,
~~\left(x \cdot y - 2^{\textstyle \frac{n-3}{2}}\right) \cdot \left(x^2+y^2\right)
\]
Starting from $2$, we can compute \mbox{$2^{\textstyle \frac{n-3}{2}}$} by performing
at most \mbox{$2 \cdot \left[\log_2\left(\frac{n-3}{2}\right)\right]$} multiplications.
Applying this observation, we find an equivalent system which contains at
most \mbox{$10+2 \cdot \left[\log_2\left(\frac{n-3}{2}\right)\right]=8+2\cdot \left[\log_2\left({n-3}\right)\right]$}
variables. If the found system contains less than $m$ variables, then we declare that the missing
variables are equal to $1$.
\end{proof}
\vskip 0.2truecm
Let
\[
D(x,u,v,s,t)=\left(u+v-x+1\right)^2+\left(2^u-s\right)^2+\left(2^v-t\right)^2
\]
For each non-positive integer $k$, the equation \mbox{$D(k,u,v,s,t)=0$} has no integer solutions.
For each positive integer $k$, the equation \mbox{$D(k,u,v,s,t)=0$} has exactly $k$ integer solutions.
\vskip 0.2truecm
\par
Let
\[
D(x,u,v,s,t)=8(u^2+v^2+s^2+t^2+1)-x
\]
For each non-positive integer $k$, the equation \mbox{$D(k,u,v,s,t)=0$} has no integer
solutions. Jacobi's four-square theorem says that for each positive integer $k$ the number
of representations of $k$ as a sum of four squares of integers equals $8s(k)$, where $s(k)$
is the sum of positive divisors of $k$ which are not divisible by $4$, \mbox{see \cite{Hirschhorn}}.
By Jacobi's theorem, for each prime $p$ the equation \mbox{$D(8(p+1),u,v,s,t)=0$} has exactly
\mbox{$8(p+1)$} integer solutions.
\vskip 0.2truecm
\noindent
{\bf Open Problem.} {\em Does there exist a polynomial \mbox{$D(x,x_1,\ldots,x_n)$}
with integer coefficients such that for each non-positive integer $k$ the equation
\mbox{$D(k,x_1,\ldots,x_n)=0$} has no integer solutions and for each positive integer
$k$ the equation \mbox{$D(k,x_1,\ldots,x_n)=0$} has exactly $k$ integer solutions?}
\vskip 0.2truecm
\par
Let the polynomials \mbox{$P_k(x)$} are defined by the recurrence \mbox{$P_{k+1}(x)=4P_k(x)(1-P_k(x))$}
with \mbox{$P_0(x)=x$}.
\begin{lemma}\label{lem2}
For each non-negative integer $k$, the polynomial \mbox{$1-2P_k(x)$} has exactly $2^k$ distinct real roots.
\end{lemma}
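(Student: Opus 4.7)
The plan is to exploit the semi-conjugacy between the logistic map $x \mapsto 4x(1-x)$ and the doubling map, by means of the substitution $x=\sin^2\theta$. First, by induction on $k$, I would prove the identity
\[
P_k(\sin^2\theta) = \sin^2(2^k\theta)
\]
for all real $\theta$. The base case $k=0$ is the definition $P_0(x)=x$, and the inductive step reduces to the double-angle identity $4\sin^2\alpha\cos^2\alpha=\sin^2(2\alpha)$ applied with $\alpha=2^k\theta$, combined with $\cos^2\alpha=1-\sin^2\alpha$. From this identity one obtains
\[
1-2P_k(\sin^2\theta) = 1-2\sin^2(2^k\theta) = \cos(2^{k+1}\theta).
\]

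Second, I would count zeros. The equation $\cos(2^{k+1}\theta)=0$ gives $\theta=\pi(2j+1)/2^{k+2}$ for $j\in\Z$, and the values lying in $[0,\pi/2]$ are precisely those with $j\in\{0,1,\ldots,2^k-1\}$, giving exactly $2^k$ distinct values of $\theta$. Since the map $\theta\mapsto\sin^2\theta$ is a bijection from $[0,\pi/2]$ onto $[0,1]$, these produce $2^k$ distinct real numbers $x_j\in[0,1]$ at which $1-2P_k(x_j)=0$.

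Third, to see that these are \emph{all} the real roots, I would observe by an easy induction on $k$ that $\deg P_k = 2^k$: the recurrence $P_{k+1}=4P_k(1-P_k)$ doubles the degree at each step, starting from $\deg P_0 = 1$. Hence $1-2P_k(x)$ is a polynomial of degree exactly $2^k$, which can have at most $2^k$ complex roots; combined with the lower bound of $2^k$ distinct real roots already exhibited, this forces equality.

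I expect no serious obstacle. The only point requiring a little care is verifying that the $2^k$ values $\theta_j=\pi(2j+1)/2^{k+2}$ give $2^k$ \emph{distinct} values of $x_j=\sin^2\theta_j$, which is immediate from the strict monotonicity of $\sin^2$ on $[0,\pi/2]$. Everything else is routine trigonometric manipulation and a degree count.
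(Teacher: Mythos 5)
Your proof is correct and is essentially the paper's argument in different notation: your identity $1-2P_k(\sin^2\theta)=\cos(2^{k+1}\theta)$ is precisely the paper's equation $1-2P_k(x)=\cos(2^k\arccos(1-2x))$ under the substitution $x=\sin^2\theta$, and both proofs then exhibit $2^k$ explicit roots in $(0,1)$ and conclude with the degree count $\deg(1-2P_k)=2^k$. Your version is slightly more self-contained in that you prove the conjugacy by induction rather than citing it, and your monotonicity argument for distinctness of the roots is cleaner than the paper's bare assertion.
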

\begin{proof}
A brief calculation leads to the equality
\begin{equation}\label{equ2}
1-2P_k(x)={\rm cos}(2^k \cdot {\rm arccos}(1-2x))
\end{equation}
where \mbox{$x \in [0,1]$} and $k$ ranges over non-negative integers, see \cite[p.~60]{Schuster}.
For \mbox{$i \in \{0,\ldots,2^k-1\}$}, the numbers
\[
\frac{1-{\rm cos}\left(\frac{\textstyle 4i+1}{\textstyle 2^{k+1}} \cdot \pi \right)}{2}
\]
are pairwise different and belong to $(0,1)$. Equation~(\ref{equ2}) implies that these numbers solve the
equation \mbox{$1-2P_k(x)=0$}. Since ${\rm deg}(1-2P_k(x))=2^k$, no other solutions exist.
\end{proof}
\begin{theorem}\label{the5}
For each positive integer $n$ there exists a positive integer $\tau(n)$ and a~system
\mbox{$U \subseteq E_{\tau(n)}$} such that $U$ has exactly $n$ solutions in reals
\mbox{$x_1,\ldots,x_{\tau(n)}$} and $\tau(n)$ grows linearly with \mbox{$\left[\log_2\left(n\right)\right]$}.
\end{theorem}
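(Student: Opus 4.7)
The plan is to associate with each positive integer $n$ a univariate polynomial $F_n\in\Z[x]$ of degree $n$ having exactly $n$ distinct real roots, and then to encode the equation $F_n(x_1)=0$ by a system in $E_m$ whose remaining variables are uniquely determined by $x_1$. Each real solution of the system will then correspond to a single real root of $F_n$.

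I would construct $F_n$ from the binary expansion $n=2^{k_1}+\dots+2^{k_r}$, with $k_1>\cdots>k_r\ge 0$, by setting
\[
F_n(x)=\prod_{j=1}^{r}\bigl(1-2P_{k_j}(x)\bigr).
\]
Lemma~\ref{lem2} gives $2^{k_j}$ distinct real roots for the $j$-th factor, so $F_n$ has at most $\deg F_n=n$ real roots. Formula~(\ref{equ2}) displays those roots explicitly as $(1-\cos((2i+1)\pi/2^{k_j+1}))/2$ for $0\le i<2^{k_j}$; since $(2i+1)/2^{k_j+1}$ has odd numerator, its reduced denominator is exactly $2^{k_j+1}$, which depends on~$j$. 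Hence the root sets of the factors are pairwise disjoint, and $F_n$ has exactly $n$ distinct real roots.

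To realise $F_n(x_1)=0$ in $E_m$, I would introduce variables for the constants $1$ and $2$, take $x_1$ as the indeterminate, and iterate the recurrence $P_{k+1}=4P_k(1-P_k)$ for $k=0,\dots,k_1-1$ using, at each step, auxiliary variables for $q_k:=1-P_k$ (defined by $P_k+q_k=1$), for $P_k\cdot q_k$, and for two successive doublings by~$2$ that produce $P_{k+1}$; all of these equations are of one of the three shapes allowed by $E_m$. From the stored values $P_{k_j}$ I would form $f_j=1-2P_{k_j}$ (two further variables per factor) and the running products $g_1=f_1$, $g_{j+1}=g_j\cdot f_{j+1}$; the closing equation $g_r+g_r=g_r$ lies in $E_m$ and forces $g_r=0$, i.e.\ $F_n(x_1)=0$. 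Since each auxiliary variable is uniquely determined by $x_1$ through a single equation, the real solutions of $U$ correspond bijectively to the real roots of $F_n$, giving exactly $n$ solutions. Each of the $k_1\le[\log_2 n]$ iteration steps and each of the $r\le k_1+1$ factors introduces $O(1)$ new variables, so $\tau(n)$ is bounded by a linear function of $[\log_2 n]$.

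The only substantive point is the disjointness argument in the second paragraph, which guarantees that the product $F_n$ really has $n$ distinct real roots rather than fewer; the rest is routine bookkeeping around the recurrence from Lemma~\ref{lem2} and the standard identity $g_r+g_r=g_r$ for pinning $g_r$ to~$0$. The main temptation to avoid is encoding $F_n$ directly from its expanded coefficients, which would destroy the $O(\log_2 n)$ variable count; reusing the iteratively computed $P_k$'s is essential.
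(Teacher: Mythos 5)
Your proof is correct, and it follows the same skeleton as the paper's: binary expansion of $n$, the polynomials $P_k$ from Lemma~\ref{lem2}, a product over the nonzero binary digits, and an $E_m$-encoding that reuses the recurrence $P_{k+1}=4P_k(1-P_k)$ to keep the variable count at $O([\log_2 n])$. The one genuine divergence is how disjointness of the factors' zero sets is secured. The paper introduces a second unknown $y$ and uses factors $(1-2P_k(x))^2+(y-k)^2$, so that each factor vanishes only on the slice $y=k$; the zero sets of distinct factors are then disjoint for free, and no analysis of whether different $1-2P_k$ share roots is needed. You instead keep a univariate product $\prod_j(1-2P_{k_j}(x))$ and prove the disjointness directly from the explicit roots $(1-\cos((2i+1)\pi/2^{k_j+1}))/2$: since $\cos$ is injective on $[0,\pi]$ and an odd-numerator dyadic fraction determines its power-of-two denominator, roots coming from distinct $k_j$ cannot coincide, so the degree-$n$ polynomial $F_n$ has exactly $n$ distinct real roots. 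This costs you a short extra argument but saves the variable $y$ and the squaring, and your write-up of the translation into equations $x_i=1$, $x_i+x_j=x_k$, $x_i\cdot x_j=x_k$ (including the standard device $g_r+g_r=g_r$ to force $g_r=0$, which the paper itself uses elsewhere) is more explicit than the paper's one-sentence appeal to the definition of $P_k$. Both routes give $\tau(n)$ linear in $[\log_2 n]$.
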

\begin{proof}
Let
\[
n=\sum_{\textstyle k=0}^{\textstyle \left[\log_2\left(n\right)\right]} a_k \cdot 2^k
\]
where all \mbox{$a_k \in \{0,1\}$}, and let
\[
W_n(x,y)=
\prod_{\stackrel{\textstyle k \in \left\{0,\ldots,\left[\log_2\left(n\right)\right]\right\}}{\textstyle a_k=1}}
\left(1-2P_k(x)\right)^2+\left(y-k\right)^2
\]
By Lemma~\ref{lem2}, for each \mbox{$k \in \left\{0,\ldots,\left[\log_2\left(n\right)\right]\right\}$} the equation
\mbox{$\left(1-2P_k(x)\right)^2+\left(y-k\right)^2=0$} has exactly $2^k$ real solutions \mbox{$(x,y)$}
and each of them satisfies \mbox{$y=k$}. Therefore, the equation \mbox{$W_n(x,y)=0$}
has exactly
\[
\sum_{\stackrel{\textstyle k \in \left\{0,\ldots,\left[\log_2\left(n\right)\right]\right\}}{\textstyle a_k=1}} 2^k=n
\]
real solutions. From the definition of polynomials \mbox{$P_k(x)$}, it follows that the equation \mbox{$W_n(x,y)=0$}
can be equivalently written as a system of equations of the form \mbox{$x_i=1$}, \mbox{$x_i+x_j=x_k$}, \mbox{$x_i \cdot x_j=x_k$}
in such a way that the number of variables grows linearly with \mbox{$\left[\log_2\left(n\right)\right]$}.
\end{proof}

\noindent
Apoloniusz Tyszka\\
Technical Faculty\\
Hugo Ko\l{}\l{}\k{a}taj University\\
Balicka 116B, 30-149 Krak\'ow, Poland\\
E-mail address: \url{rttyszka@cyf-kr.edu.pl}
\end{sloppypar}
\end{document}